\numberwithin{equation}{section}
\newtheorem{thm}{Theorem}[section]
\newtheorem{prop}[thm]{Proposition}
\newtheorem{cor}[thm]{Corollary}
\newtheorem{lemma}[thm]{Lemma}
\newtheorem{defn}[thm]{Definition}
\newtheorem{preremark}[thm]{Remark}
\newenvironment{remark}{\begin{preremark}\rm}{\medskip \end{preremark}}
\numberwithin{equation}{section}
\newcommand{\norm}[1]{\left\Vert#1\right\Vert}
\newcommand{\R}{\mathbb R}
\newcommand{\grad} {\nabla}
\newcommand{\lap} {\Delta}
\newcommand{\dd} {\; \mathrm{d}}
\newcommand{\RP}{ \mathbb R \text{P}}
\DeclareMathOperator{\tr}{tr}
\definecolor{sh}{RGB}{255,0,100}
\begin{document}

\begin{abstract}
    We prove upper and lower bounds on the optimal constant $\Lambda_d$ of the Bakry-\'Emery $\Gamma_2$ criterion for positive symmetric functions on the unit sphere $S^{d-1}$, which also can be identified as positive functions on the real projective space $\RP^{d-1}$. The Bakry-\'Emery $\Gamma_2$ criterion inequality was crucially used to prove the monotonicty of the Fisher information for the Landau equation by Guillen and Silvestre \cite{guillen2023global} recently. Therefore, a better bound on the optimal constant $\Lambda_d$ expands the range of interaction potentials that exhibits the monotonicity of the Fisher information.
    In particular, we compute that $\Lambda_3$ is between $5.5$ and $5.739$.
\end{abstract}

\title{Bounds for the optimal constant of the Bakry-\'Emery $\Gamma_2$ criterion inequality on $\RP^{d-1}$ }
\author{Sehyun Ji}
\address[Sehyun Ji]{Department of Mathematics, University of Chicago,  Chicago, Illinois 60637, USA}
\email{jise0624@uchicago.edu}
\maketitle

\section{Introduction}
 In this work, we study the largest constant $\Lambda_d$ such that the inequality
\begin{equation} \label{eq: 2nd logSobolev}
    \int_{S^{d-1}} f\Gamma_2(\log f, \log f) \dd \sigma \ge \Lambda_d \int_{S^{d-1}} f |\grad_\sigma \log f|^2 \dd \sigma
\end{equation}
holds for any positive symmetric function $f$ on the unit sphere $S^{d-1} \subset \R^d$
where
\begin{equation}
    \Gamma_2(g,g)=  \norm{\grad_\sigma^2 g}^2 + (d-2)|\grad_\sigma g|^2.
\end{equation}

Here  $\grad_\sigma$ is the spherical gradient of $S^{d-1}$ at $\sigma \in S^{d-1}$ and $\dd \sigma$ is a normalized surface measure of $S^{d-1}$: 
\begin{equation}
    \int_{S^{d-1}} \dd \sigma =1.
\end{equation}Note that the class of positive symmetric functions on the unit sphere $S^{d-1}$ can be identified as the class of positive functions on the real projective space $\RP^{d-1}$.

In the literature, the inequality \eqref{eq: 2nd logSobolev} is known as the Bakry-\'Emery $\Gamma_2$ criterion for the logarithmic Sobolev inequality. 
It has got an attention as a tool to study the logarithmic Sobolev inequalities on Riemannian manifolds, but not much as of itself.
The criterion says that if the inequality \eqref{eq: 2nd logSobolev} holds, then the logarithmic Sobolev inequality 
\begin{equation} \label{eq: log Sobolev}
     \int_{S^{d-1}} f|\grad_\sigma \log f|^2 \dd \sigma \ge 2 c \left[ \int_{S^{d-1}} f \log f \dd \sigma - \left(\int_{S^{d-1}} f \dd \sigma\right) \log \left(\int_{S^{d-1}} f \dd \sigma\right) \right]
\end{equation}
with $c=\Lambda_d$ holds. Let's denote $\alpha_d$ as the largest constant $c$ such that
the inequality \eqref{eq: log Sobolev} holds for every positive symmetric function $f$ on $S^{d-1}$.
 Then, the Bakry-\'Emery $\Gamma_2 $ criterion is implying
\begin{equation} \label{ordering1}
    \alpha_d \ge \Lambda_d.
\end{equation}
By substituting $g=\sqrt{f}$, the inequality \eqref{eq: log Sobolev} is equivalent to
\begin{equation} \label{eq: log Sobolev2}
    \int_{S^{d-1}} |\grad_\sigma g|^2 \dd \sigma \ge \frac{c}{2}  \left[\int_{S^{d-1}} g^2 \log g^2 \dd \sigma- \left( \int_{S^{d-1}}  g^2 \dd \sigma \right) \log \left( \int_{S^{d-1}} g^2 \dd \sigma \right)   \right]
\end{equation}
for a positive symmetric function $g$ on $S^{d-1}$.
Furthermore, there is a classical result that the inequality \eqref{eq: log Sobolev2} implies the Poincar\'e inequality 
\begin{equation} \label{Poin}
    \int_{S^{d-1}} |\grad_\sigma g|^2 \dd \sigma \ge c \int_{S^{d-1}} \left(g-\int_{S^{d-1}}g \dd \sigma \right)^2 \dd \sigma= c \left(\int_{S^{d-1}} g^2 \dd \sigma -\left(\int_{S^{d-1}} g \dd\sigma\right)^2 \right).
\end{equation}
Let's denote $\lambda_d$ as the largest constant $c$ such that the inequality \eqref{Poin} holds for every positive symmetric function $g$ on $S^{d-1}$, then
\begin{equation}
    \lambda_d \ge \alpha_d.
\end{equation}
Moreover, $\lambda_d$ is the first nonzero eigenvalue of the Laplace-Beltrami operator $-\lap$. For the class of positive symmetric functions on $S^{d-1}$, it is elementary that $\lambda_d=2d$ by spherical harmonics (See Lemma \ref{lem: Poincare} for the explanation).
From the above discussion, we have the ordering
\begin{equation} \label{ineq: ordering}
    \lambda_d =2d \ge \alpha_d \ge \Lambda_d.
\end{equation}

In the present paper, we focus on the inequality \eqref{eq: 2nd logSobolev} itself and prove bounds for $\Lambda_d$. This work is inspired by the recent breakthrough of Guillen and Silvestre \cite{guillen2023global} of proving the existence of global smooth solutions for the Landau equation with Coulomb potentials.
The inequality \eqref{eq: 2nd logSobolev} is crucially used in the proof and they call it as a Poincar\'e-like inequality . In particular, they proved $\Lambda_3 \ge \frac{19}{4}$. It is worth to mention that $\Lambda_3\ge \frac{9}{4}$ is needed to cover the Coulomb potential case.\\

Our first main result is on the lower bound for $\Lambda_d$.
\begin{thm}[The lower bound for $\Lambda_d$]\label{thm: Thm 1}
    Let $f:S^{d-1}\to \R_{>0}$ be a function defined on $S^{d-1}$ and symmetric, i.e. $f(\sigma)=f(-\sigma)$ for every $\sigma \in S^{d-1}$. The optimal constant $\Lambda_d$ for the inequality \eqref{eq: 2nd logSobolev} has the following lower bound:
    \begin{equation}
        \Lambda_d \ge d+3-\frac{1}{d-1}.
    \end{equation}
    In particular, $\Lambda_2 \ge 4$ and $\Lambda_3 \ge 5.5$.
\end{thm}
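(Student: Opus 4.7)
The plan is to recast the inequality via the substitution $g = \sqrt{f}$: a direct computation using Bochner's identity on $S^{d-1}$ and repeated integration by parts converts the Bakry-\'Emery inequality \eqref{eq: 2nd logSobolev} into the equivalent statement
\[
\int_{S^{d-1}} (\Delta g)^2 \, d\sigma + \int_{S^{d-1}} \frac{|\nabla g|^2 \Delta g}{g} \, d\sigma \ge \Lambda_d \int_{S^{d-1}} |\nabla g|^2 \, d\sigma
\]
for positive symmetric $g$. This form is attractive because the right-hand side is quadratic in $g$, while the cubic cross term on the left can be reshaped via integration by parts.

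The key step is the following identity for the cross term. Writing the spherical Hessian as $\nabla^2 g = \tfrac{\Delta g}{d-1}\id + T$ where $T$ is its traceless part, and starting from $\int |\nabla g|^2 \Delta g/g = -2\int \nabla^2 g(\nabla g, \nabla g)/g + \int |\nabla g|^4/g^2$ (one integration by parts), I obtain after rearrangement
\[
\int \frac{|\nabla g|^2 \Delta g}{g} = \frac{d-1}{d+1}\left[-2\int \frac{T(\nabla g, \nabla g)}{g} + \int \frac{|\nabla g|^4}{g^2}\right].
\]
To control the term involving $T$, I will use the sharp bound $|T(\xi,\xi)| \le \sqrt{(d-2)/(d-1)}\,\|T\|\,|\xi|^2$ for traceless symmetric $(d-1) \times (d-1)$ matrices (a consequence of $\max_i \lambda_i^2 \le \tfrac{d-2}{d-1}\|T\|^2$ for traceless $T$), paired with a weighted AM-GM inequality at parameter $\alpha = \sqrt{(d-2)/(d-1)}$. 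This specific choice of $\alpha$ is decisive: it forces the $\int |\nabla g|^4/g^2$ contributions on both sides to cancel exactly, leaving only quantities involving $\int \|T\|^2$, $\int (\Delta g)^2$, and $\int |\nabla g|^2$.

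To close the argument, I will apply the integrated Bochner identity $\int \|T\|^2 = \tfrac{d-2}{d-1}\int (\Delta g)^2 - (d-2)\int |\nabla g|^2$ to eliminate $\int \|T\|^2$, and then the Poincar\'e-type estimate $\int (\Delta g)^2 \ge 2d \int |\nabla g|^2$ from Lemma \ref{lem: Poincare} (valid because the smallest nonzero eigenvalue of $-\Delta$ on even spherical harmonics is $\lambda_2 = 2d$). A direct arithmetic simplification then produces the sharp lower bound $2d - (d-2)^2/(d-1) = d+3 - 1/(d-1)$. The hardest part is recognizing the correct pairing of the sharp traceless matrix inequality with the optimal AM-GM weight; the algebraic miracle that all quartic terms cancel exactly at that weight is the key insight. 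The case $d=2$ is degenerate: the tangent space of $S^1$ is one-dimensional so $T \equiv 0$, and the argument collapses to $\int (\Delta g)^2 \ge 4 \int |\nabla g|^2$ for $\pi$-periodic $g$, immediately yielding $\Lambda_2 \ge 4$.
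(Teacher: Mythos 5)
Your proposal is correct, and it reaches the paper's constant by a genuinely different decomposition; I verified the chain. With $f=g^2$ the left side of \eqref{eq: 2nd logSobolev} is indeed $4\int (\Delta_\sigma g)^2\dd\sigma+4\int |\nabla_\sigma g|^2\Delta_\sigma g/g\,\dd\sigma$, and your rearranged cross-term identity, the sharp traceless bound $|T(\xi,\xi)|\le\sqrt{(d-2)/(d-1)}\,\|T\|\,|\xi|^2$, the AM--GM weight $\sqrt{(d-2)/(d-1)}$ (which exactly cancels the $\int|\nabla_\sigma g|^4/g^2$ terms), and the integrated Bochner identity $\int\|T\|^2\dd\sigma=\frac{d-2}{d-1}\int(\Delta_\sigma g)^2\dd\sigma-(d-2)\int|\nabla_\sigma g|^2\dd\sigma$ combine to give
\begin{equation}
\int_{S^{d-1}}\left[(\Delta_\sigma g)^2+\frac{|\nabla_\sigma g|^2\Delta_\sigma g}{g}\right]\dd\sigma\ \ge\ \frac{4d-5}{(d+1)(d-1)}\int_{S^{d-1}}(\Delta_\sigma g)^2\dd\sigma+\frac{(d-2)^2}{d+1}\int_{S^{d-1}}|\nabla_\sigma g|^2\dd\sigma,
\end{equation}
so that the symmetric Poincar\'e input $\int(\Delta_\sigma g)^2\ge 2d\int|\nabla_\sigma g|^2$ yields the coefficient $\frac{2d(4d-5)}{(d+1)(d-1)}+\frac{(d-2)^2}{d+1}=d+3-\frac{1}{d-1}$, exactly as claimed. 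The paper instead stays with $A=\Delta_\sigma\log f$, $B=|\nabla_\sigma\log f|^2$, applies the curvature-dimension bound \eqref{eq: cd condtion} to the power family $f^{\beta+1}$, and optimizes a convex combination of the resulting $\beta$-family with \eqref{ineq: ineq2}. The two routes are morally equivalent: your traceless-matrix inequality is precisely the envelope over $s$ of $\|\nabla_\sigma^2 g+s\,\nabla_\sigma g\otimes\nabla_\sigma g\|^2\ge\frac{1}{d-1}(\Delta_\sigma g+s|\nabla_\sigma g|^2)^2$, which is what the power family generates, and your AM--GM weight plays the role of the paper's optimal $\beta=-\frac{2d-3}{2d-4}$. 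What your version buys is leaner bookkeeping (no $\theta_d(\beta)$, $\tau_d(\beta)$, no admissible window for $\beta$) and a transparent source of the gain over $\Lambda_d\ge d-1$, namely the sharp constant $\sqrt{(d-2)/(d-1)}$ for traceless Hessians; what the paper's formulation buys is that it transfers verbatim to the general $CD(\rho,n)$ statement of Theorem \ref{thm: cd} and makes the comparison with the $\beta=-1$ (i.e.\ $\log f$) choice of Guillen--Silvestre explicit. Your treatment of $d=2$ (where $T\equiv 0$, the cross term is nonnegative, and $\int(\Delta_\sigma g)^2\ge 4\int|\nabla_\sigma g|^2$ suffices) is also correct.
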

\begin{remark}
    It was already observed in \cite[Lemma A.2.] {guillen2023global} that $\Lambda_2=4$. We can deduce the same conclusion from Theorem \ref{thm: Thm 1} and the ordering \eqref{ineq: ordering}.
\end{remark}
As a corollary, we can improve the theorem regarding the monotonicity of the Fisher information by Guillen and Silvestre.
\begin{cor} [Improvement of Theorem 1.1 of \cite{guillen2023global}]
    Let $f:[0,T]\times \R^3$ be a classical solution for the space-homogeneous Landau equation of the following form:
    \begin{equation}
        \partial_t f =\partial_{v_i} \int_{\R^3} \alpha(|v-w|)a_{ij}(v-w) (\partial_{v_j}-\partial_{w_j}) [f(v)f(w)] \dd w
    \end{equation}
    for $a_{ij}(z)=|z|^2\delta_{ij}-z_iz_j$ with the interaction potential $\alpha$. If $\alpha$ satisfies
    \begin{equation}
        \sup_{r>0} \frac{r|\alpha'(r)|}{\alpha(r)} \le \sqrt{22},
    \end{equation}
    then the Fisher information $i(f)$ is decreasing in time.
    
\end{cor}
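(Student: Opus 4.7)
The plan is to proceed by direct substitution into the proof of Theorem 1.1 of \cite{guillen2023global}, using the improved constant $\Lambda_3 \ge 5.5$ supplied by our Theorem \ref{thm: Thm 1} with $d=3$ in place of their $\Lambda_3 \ge 19/4$. Since $5.5 = 22/4$ and the interaction-potential threshold in their argument scales as $2\sqrt{\Lambda_3}$ (one has $2\sqrt{19/4}=\sqrt{19}$ in \cite{guillen2023global} and $2\sqrt{22/4}=\sqrt{22}$ here), the corollary amounts to tracking where $\Lambda_3$ enters and verifying that no other ingredient of their proof is affected.

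More concretely, the first step is to recall the computation of $\frac{d}{dt} i(f(t))$ from \cite{guillen2023global}: differentiating the Fisher information along the Landau flow yields an integral over pairs $(v,w)\in\R^3\times\R^3$ of a quadratic expression in $\grad^2 \log f$ and $\grad \log f$ at the two points, weighted by $\alpha(|v-w|) a_{ij}(v-w)$. After symmetrization in $(v,w)$ and exploiting the projection structure of $a_{ij}$, this integrand reduces, on the unit sphere of directions $\sigma = (v-w)/|v-w|$, to an expression that the authors dominate by a combination of $\Gamma_2(\log f,\log f)$ and $|\grad_\sigma \log f|^2$ plus a cross-term controlled by $r\alpha'(r)/\alpha(r)$ with $r=|v-w|$. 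The second step is to insert the inequality \eqref{eq: 2nd logSobolev} with the constant $\Lambda_3$ from Theorem \ref{thm: Thm 1} to absorb the $\Gamma_2$ term, reducing the question of nonpositivity to a pointwise quadratic inequality in the single scalar parameter $t := r|\alpha'(r)|/\alpha(r)$.

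The third step is then purely algebraic: the resulting quadratic condition in $t$ becomes nonpositive precisely when $t \le 2\sqrt{\Lambda_3}$, which under our improved bound becomes $t \le \sqrt{22}$. Taking the supremum in $r$ gives exactly the hypothesis of the corollary, and concludes $\frac{d}{dt} i(f(t)) \le 0$.

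The main (and essentially only) obstacle is bookkeeping: one has to check that the proof of Theorem 1.1 in \cite{guillen2023global} really does isolate $\Lambda_3$ as a black box and that the dependence on the potential enters only through $\sup_{r>0} r|\alpha'(r)|/\alpha(r)$, so that an improved value of $\Lambda_3$ translates one-for-one into a weaker hypothesis on $\alpha$. Since \cite{guillen2023global} explicitly states that $\Lambda_3 \ge 9/4$ suffices to handle the Coulomb case and $\Lambda_3 \ge 19/4$ yields the threshold $\sqrt{19}$, the relationship $\text{threshold} = 2\sqrt{\Lambda_3}$ is already implicit in their writeup, so this step is a direct quotation rather than a new computation.
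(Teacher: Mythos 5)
Your proposal is correct and matches the paper's (implicit) argument exactly: the paper states this corollary without a written proof precisely because it follows by substituting the improved bound $\Lambda_3 \ge 5.5 = 22/4$ from Theorem \ref{thm: Thm 1} into the Guillen--Silvestre machinery, where the admissible threshold on $\sup_{r>0} r|\alpha'(r)|/\alpha(r)$ is $2\sqrt{\Lambda_3}$ (consistent with their $\sqrt{19}$ from $\Lambda_3\ge 19/4$ and with $\sqrt{9}=3$ for the Coulomb case from $\Lambda_3\ge 9/4$). Your identification of $\Lambda_3$ as the only quantity that changes, and of $2\sqrt{22/4}=\sqrt{22}$ as the new threshold, is precisely the intended reasoning.
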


\begin{remark}
    Without the symmetry assumption, it is well known that
    \begin{equation}
        \lambda_d=\alpha_d=\Lambda_d=d-1,
    \end{equation}
    and $\Lambda_3=2$ in particular. Thus, the symmetry assumption is needed to prove the existence of global smooth solutions for the Landau equation with Coulomb potentials. 
\end{remark}

Our second main result is improving the obvious upper bound $6$ for $\Lambda_3$. We only consider the case $d=3$ since it comes from the original motivation of the Landau equation.
 Our method for improving the upper bound for $\Lambda_3$ is plugging in an explicit class of functions to the inequality \eqref{eq: 2nd logSobolev}. 
\begin{thm}[The upper bound for $\Lambda_3$]\label{thm: upper}
    Let $h:S^2 \to \R$ be 
    \[
     h(x,y,z)=(z^2+t)^2
    \]
    for a parameter $t\in \R_{> 0}$. Then,
    \begin{equation}
        \Lambda_3 \le \min_{t>0} \left[ 5 (3t+1)(3t+2) - 15(t+1)(3t+1)\sqrt{t}\arctan\left(\frac{1}{\sqrt{t}}\right) \right] \approx 5.73892.
    \end{equation}
\end{thm}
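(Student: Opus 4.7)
The strategy is the direct reverse of the direction in which the inequality \eqref{eq: 2nd logSobolev} is usually applied. Since $\Lambda_3$ is defined as the largest constant for which the inequality holds over all positive symmetric $f$, plugging in any specific admissible $f$ yields
\[
\Lambda_3 \le \frac{\int_{S^2} f\,\Gamma_2(\log f,\log f)\,\dd \sigma}{\int_{S^2} f\,|\grad_\sigma \log f|^2\,\dd \sigma}.
\]
I would verify that $h(x,y,z)=(z^2+t)^2$ is admissible (it is positive for $t>0$ and invariant under $\sigma\mapsto-\sigma$ because $z^2$ is), then set $f=h$ and compute both integrals, finally minimizing the resulting expression over $t>0$.

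The computation simplifies because $h$ is zonal: it depends only on $u=z=\sigma\cdot e_3=\cos\phi$. For any $F(u)$ on $S^{d-1}$, the spherical gradient satisfies $|\grad_\sigma F|^2=(1-u^2)F'(u)^2$, and using geodesic polar coordinates around $e_3$ the Hessian norm is
\[
\norm{\grad_\sigma^2 F}^2 = \bigl(-uF'(u)+(1-u^2)F''(u)\bigr)^2 + (d-2)\,u^2 F'(u)^2,
\]
which for $d=3$ reduces the problem to evaluating two scalar quantities in $u$. Applying this to $F(u)=\log h=2\log(u^2+t)$ with $F'=\tfrac{4u}{u^2+t}$ and $F''=\tfrac{4(t-u^2)}{(u^2+t)^2}$, a direct algebraic simplification (the key cancellation being $u^4+u^2(1-u^2)=u^2$) yields
\[
h\,\Gamma_2(\log h,\log h) = \frac{16\bigl(t-(1+2t)u^2\bigr)^2}{(u^2+t)^2} + 16u^2,\qquad h\,|\grad_\sigma\log h|^2 = 16u^2(1-u^2).
\]

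Both integrals over $S^2$ reduce to one-dimensional integrals on $u\in[-1,1]$ with measure $\tfrac12\,\dd u$ after integrating out the azimuthal angle. The denominator integral is elementary and equals $\tfrac{32}{15}$. For the numerator, I would decompose the rational expression via $\tfrac{t-(1+2t)u^2}{u^2+t}=-(1+2t)+\tfrac{2t(1+t)}{u^2+t}$, which reduces everything to $\int_{-1}^1 \tfrac{\dd u}{u^2+t}$ and $\int_{-1}^1 \tfrac{\dd u}{(u^2+t)^2}$; these produce the factor $\arctan(1/\sqrt t)$ via the standard antiderivatives. Collecting terms and recognizing $2+9t+9t^2=(3t+1)(3t+2)$ yields the claimed closed form for the ratio, whose minimum in $t$ is then a one-variable numerical optimization giving $\approx 5.73892$. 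The main non-trivial step is the algebraic simplification of $\Gamma_2(\log h,\log h)$ after substituting $F'$ and $F''$; once that collapses into a clean rational expression in $u$ and $t$, the remaining steps are routine calculus.
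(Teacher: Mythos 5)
Your proposal is correct, and I verified that your pointwise expressions integrate to exactly the stated closed form: with $u=z$, your numerator integral is $16\bigl[(1+2t)^2+2t(1+t)-2t(1+t)(3t+1)\tfrac{1}{\sqrt t}\arctan(\tfrac{1}{\sqrt t})\bigr]+\tfrac{16}{3}$, and dividing by $i(h)=\tfrac{32}{15}$ gives $5(3t+1)(3t+2)-15(t+1)(3t+1)\sqrt t\arctan(\tfrac{1}{\sqrt t})$, matching the theorem. The overall strategy (test function in the ratio, minimize over $t$) is necessarily the same as the paper's, but your computational route differs in a genuine way: the paper never computes the Hessian pointwise. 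It instead uses the integral identity, obtained from Bochner's formula and integration by parts, that $\int_{S^2} h\,\Gamma_2(\log h,\log h)\,\dd\sigma=\int_{S^2} h\,(A+B)(A+\tfrac12 B)\,\dd\sigma=4\int_{S^2}\bigl(\lap_\sigma\sqrt h+\tfrac{|\grad_\sigma\sqrt h|^2}{\sqrt h}\bigr)\lap_\sigma\sqrt h\,\dd\sigma$, and then exploits that $\sqrt h=z^2+t$ differs from the eigenfunction $z^2-\tfrac13$ by a constant, so $-\lap_\sigma\sqrt h=6(z^2-\tfrac13)$ and only the single elementary integral $\int_{S^2}\frac{\dd\sigma}{z^2+t}$ appears. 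Your route computes the covariant Hessian of the zonal function $\log h$ explicitly via the geodesic polar formula $\norm{\grad_\sigma^2 F}^2=(-uF'+(1-u^2)F'')^2+(d-2)u^2F'^2$, which is correct (the Hessian of a zonal function is diagonal with radial eigenvalue $G''(\phi)$ and $d-2$ tangential eigenvalues $\cot\phi\,G'(\phi)$), and then needs both $\int\frac{\dd u}{u^2+t}$ and $\int\frac{\dd u}{(u^2+t)^2}$. What the paper's approach buys is shorter algebra and no need to justify a Hessian formula; what yours buys is a self-contained pointwise evaluation of $\Gamma_2$ that does not rely on the integral Bochner identity. The only step you should spell out in a write-up is the justification of the zonal Hessian formula (or a reference for it), since it carries the whole computation; a quick consistency check is that its trace reproduces $\lap_\sigma(z^2)= 2-6z^2$.
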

The minimum $5.73892\cdots$ is achieved at $t\approx 0.69214$ and computed numerically. It is worth noting that plugging in the same class of functions from Theorem \ref{thm: upper} to the logarithmic Sobolev inequality \eqref{eq: log Sobolev} provides the upper bound for $\alpha_3$:
\begin{thm}[The upper bound for $\alpha_3$] \label{thm: upperalpha}
    \begin{equation}
        \alpha_3 \le \min_{t>0}\left[ \left(  2 t^2 \sqrt{t}\arctan\left(\frac{1}{\sqrt{t}}\right) +\frac{15}{16}\left(t^2+\frac{2t}{3}+\frac15 \right)\log\left(\frac{t^2+2t+1}{t^2+\frac{2t}{3}+\frac15}\right)-\frac{120t^2+35t+9}{60} \right)^{-1} \right] \approx 5.8358.
    \end{equation}
    
\end{thm}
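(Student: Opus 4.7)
The plan is to mirror the proof of Theorem \ref{thm: upper}: test the logarithmic Sobolev inequality \eqref{eq: log Sobolev} (with $d=3$) against the same trial function $f(\sigma)=h(z)=(z^2+t)^2$, where $z$ denotes the third coordinate of $\sigma\in S^2$. Since $h$ depends only on $z^2$, it is positive and invariant under $\sigma\mapsto-\sigma$, so the symmetry hypothesis is met. Zonality reduces every integral to a one-dimensional one via $\int_{S^2}F(z)\,\dd\sigma=\tfrac12\int_{-1}^{1}F(z)\,\dd z$, while the spherical gradient satisfies $|\grad_\sigma h|^2=(1-z^2)(h'(z))^2$.

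The left-hand side of \eqref{eq: log Sobolev} collapses nicely thanks to the cancellation $h|\grad_\sigma\log h|^2=|\grad_\sigma h|^2/h=16z^2(1-z^2)$, yielding the $t$-independent value $\int_{S^2}h|\grad_\sigma\log h|^2\,\dd\sigma=\tfrac{32}{15}$. A direct polynomial integration gives the mass $M(t):=\int_{S^2}h\,\dd\sigma=t^2+\tfrac{2t}{3}+\tfrac15$. The main computational step is to evaluate
\[
\int_{S^2} h\log h\,\dd\sigma \;=\; 2\int_0^1 (z^2+t)^2\log(z^2+t)\,\dd z;
\]
I would integrate by parts on each of $\int_0^1 z^{2k}\log(z^2+t)\,\dd z$ for $k=0,1,2$, writing $z^{2k+2}/(z^2+t)$ as a polynomial plus a multiple of $1/(z^2+t)$ and invoking the elementary evaluation $\int_0^1\tfrac{\dd z}{z^2+t}=\tfrac{1}{\sqrt t}\arctan(1/\sqrt t)$.

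Collecting terms, the coefficient of $\log(1+t)$ should come out exactly equal to $M(t)$, so that after subtracting $M\log M$ the two logarithms combine into the ratio $M\log\bigl((t+1)^2/M\bigr)$; the arctangent contributions aggregate to $\tfrac{32t^2}{15}\sqrt t\arctan(1/\sqrt t)$ and the remaining polynomial pieces to $-\tfrac{4}{25}-\tfrac{28t}{45}-\tfrac{32t^2}{15}$. Substituting into \eqref{eq: log Sobolev} yields
\[
\alpha_3 \;\le\; \frac{32/15}{2\bigl[\int_{S^2}h\log h\,\dd\sigma - M\log M\bigr]} \;=\; \frac{1}{\tfrac{15}{16}\bigl[\int_{S^2}h\log h\,\dd\sigma - M\log M\bigr]},
\]
and inserting the collected expression for $\int h\log h\,\dd\sigma - M\log M$ then scaling by $\tfrac{15}{16}$ exactly reproduces the denominator displayed in the theorem. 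A one-variable numerical minimization locates the optimum near $t\approx 0.69$ and yields the announced value $\approx 5.8358$. The only genuine difficulty is the bookkeeping in the integration by parts: one must verify that the $\log(1+t)$-coefficient matches $M(t)$ exactly so the two logarithms combine cleanly, while simultaneously tracking the polynomial-in-$t$ and $\sqrt t\arctan(1/\sqrt t)$ contributions across all three monomial integrals.
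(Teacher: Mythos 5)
Your proposal is correct and takes essentially the same route as the paper: test \eqref{eq: log Sobolev} with $h=(z^2+t)^2$, use $i(h)=\tfrac{32}{15}$ and $\int_{S^2}h\,\dd\sigma=t^2+\tfrac{2t}{3}+\tfrac15$, and reduce $\int_{S^2}h\log h\,\dd\sigma$ to the integrals $\int_0^1 z^{2k}\log(z^2+t)\,\dd z$ via $\int_0^1\frac{\dd z}{z^2+t}=\frac{1}{\sqrt t}\arctan\bigl(\frac{1}{\sqrt t}\bigr)$, and your collected terms (arctangent coefficient $\tfrac{32}{15}t^2\sqrt t$, logarithm coefficient matching the mass, polynomial part $-\tfrac{4}{25}-\tfrac{28t}{45}-\tfrac{32t^2}{15}$) agree exactly with the paper's $-\tfrac{4(120t^2+35t+9)}{225}$ and yield the stated denominator after multiplying by $\tfrac{15}{16}$. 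The only trivial slip is the location of the minimizer, which is $t\approx 0.757585$ rather than $t\approx 0.69$ (the latter is the optimizer for Theorem \ref{thm: upper}); the minimum value $\approx 5.8358$ is as stated.
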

The minimum $5.8358\cdots$ is achieved at $t \approx 0.757585$ and computed numerically.
Finding the precise values of $\alpha_3$ and $\Lambda_3$ still remains as an interesting question to be answered.

\begin{remark}
    It is not difficult to verify that Remark 5.1.5 of \cite{bakry2014markov} implies $\alpha_3 <\lambda_3=6$. In fact, $\alpha_d<\lambda_d$ holds for every $d\ge 3$.
    Theorem \ref{thm: upperalpha} provides a quantitative estimate of the difference between $\alpha_3$ and $\lambda_3$.
    However, it is unclear whether $\alpha_3=\Lambda_3$ or $\alpha_3>\Lambda_3$. 
\end{remark}

\section{Preliminaries}
\subsection{The logarithmic Sobolev inequality and the Curvature-Dimension condition}
The logarithmic Sobolev inequality has been studied extensively following the seminal paper by Gross \cite{gross1975log}. Let's start by recalling the definitions of the Carr\'e du champ operator and the iterated Carr\'e du champ opertator in the context of diffusion Markov generators $L$ on some measure space $M$. In this paper, we focus on the case where $M$ is a Riemannian manifolds.

\begin{defn}[Carr\'e du champ operator]
    The Carr\'e du champ operator $\Gamma$ is defined as
    \begin{equation}
        \Gamma(f,g):=\frac12 \left(L(fg)-f Lg-gLf \right).
    \end{equation}
    Moreover, the iterated Carr\'e du champ operator $\Gamma_2$ is defined as 
    \begin{equation}
        \Gamma_2(f,g):=\frac12 \left(L\Gamma(f,g)-\Gamma(f,Lg)-\Gamma(Lf,g) \right).
    \end{equation}

\end{defn}
  In particular, for a Riemannian manifold $M$ and the Laplace-Beltrami operator $L =\lap$, 
    \begin{equation}
        \Gamma(f,g)=\grad f \cdot \grad g,
    \end{equation}
    \begin{equation}
        \Gamma_2(g,g)=\frac12 \lap (|\grad g|^2)-\grad g \cdot \grad \lap g= \norm{\grad^2 g}^2+Ric(\grad g,\grad g).
    \end{equation}
 Note that the second equation follows from the Bochner's formula. With the new notations, the inequality \eqref{eq: 2nd logSobolev} can be written as
 \begin{equation} \label{eq: second log Sobolev form}
    \int_{S^{d-1}} f (\norm{\grad_\sigma^2 \log f}^2+(d-2) |\grad_\sigma \log f|^2) \dd \sigma  \ge \Lambda_d \int_{S^{d-1}} f |\grad_\sigma \log f|^2  \dd \sigma
\end{equation}
using that the Ricci curvature of $(S^{d-1},g_{round})$ is $(d-2)g_{round}$. We also define the Curvauture-Dimension condition, which was introduced by Bakry and \'Emery in \cite{bakry1985emery} in the context of hypercontractivity of Markov diffusion operators.

\begin{defn}[The Curvature-Dimension condition]
    We say an operator $L$ on a space $M$ satisfies the Curvature-Dimension condition $CD(\rho,n)$ if 
    \begin{equation} \label{BakryEmery CD}
        \Gamma_2(g,g) \ge \frac{1}{n}(L g)^2 +\rho\Gamma(g,g)
    \end{equation}
    holds for every function $g$ on $M$.
\end{defn}
It is well known if $M$ is a Riemannian manifold of dimension $n$ with Ricci curvature bounded from below by $\rho>0$, then the Laplace-Beltrami operator $L=\lap$ satisfies the Curvature-Dimension condition $CD(\rho,n)$. Therefore, the Laplace-Beltrami operators $\lap$ of $S^{d-1}$ and $\RP^{d-1}$ satisfy the Curvature-Dimension condition $CD(d-2,d-1)$. 
We can also interpret this condition by verifying
\begin{equation} \label{eq: cd condtion}
    \norm{\grad^2_\sigma g}^2 \ge \frac{1}{d-1} (\lap_\sigma g)^2
\end{equation}
using spherical coordinates.
Indeed, the matrix $\grad_\sigma^2 g$ on $\R^d$ does not have full rank, i.e., its rank is at most $d-1$. Note that both spaces $\RP^{d-1}$ and $S^{d-1}$ have the same Ricci curvature and their Laplace-Beltrami operators satisfy the same Curvature-Dimension condition $CD(d-2,d-1)$. However, the first nonzero eigenvalues $\lambda$ of $-\lap$ are different for  $S^{d-1}$ and $\RP^{d-1}$.

A simple consequence of the Curvature-Dimension condition is the famous Lichnerowicz lower bound on the first nonzero eigenvalue $\lambda_M$ for $-\lap$ \cite{lichnerowicz1958geometry}:
\begin{equation} \label{eq: Lich}
    \lambda_M \ge \frac{\rho n}{n-1}.
\end{equation} 
Indeed, if the inequality \eqref{BakryEmery CD} for $L=\lap$ is integrated over $M$, then it gives
\begin{equation} \label{Bakry-Emery}
    \left(1-\frac1n\right)\int_M (\lap g)^2 \ge  \rho  \int_M |\grad g|^2 
\end{equation}
which implies the inequality \eqref{eq: Lich}. Another consequence of the Curvature-Dimension condition is the important result by Bakry and \'Emery \cite{bakry1985emery}
\begin{equation}  \label{BE}
    \alpha_M \ge \frac{\rho n}{n-1},
\end{equation}
 which implies the Lichnerowicz bound \eqref{eq: Lich}. Bakry and \'Emery actually prove
 \begin{equation} \label{BE2}
     \Lambda_M \ge \frac{\rho n}{n-1}
 \end{equation}
 to obtain the lower bound \eqref{BE}.
 Later, Rothaus \cite{rothaus1986hypercontractivity} improved the lower bound for $\alpha_M$ to
 \begin{equation} \label{eq: Rothaus}
     \alpha_M \ge \frac{4n}{(n+1)^2} \lambda_M +\frac{(n-1)^2}{(n+1)^2}\cdot  \frac{\rho n}{n-1} .
 \end{equation}
    
 For $M=\RP^{d-1}$, it is easy to check that the inequalities \eqref{BE2} and \eqref{eq: Rothaus} give the lower bounds for $\Lambda_d$ and $\alpha_d$ by plugging in $(\lambda_d,\rho,n)=(2d,d-2,d-1)$:
\begin{equation} \label{eq: asymp}
    \Lambda_d \ge d-1,
\end{equation}
\begin{equation} \label{eq: optimal logSobolev}
    \alpha_d \ge d+3-\frac{4}{d^2}.
\end{equation}
In particular, $\Lambda_3\ge 2$ and $\alpha_3 \ge \frac{50}{9}=5.5555\cdots$. 

We also prove that in the framework of the Curvature-Dimension condition, Theorem \ref{thm: Thm 1} can be generalized as the following.
\begin{thm}[The lower bound for $\Lambda_M$ under $CD(\rho,n)$] \label{thm: cd}
Assume the Laplace-Beltrami operator $L=\lap$ of a Riemannian manifold $M$ satisfies the Curvature-Dimension condition $CD(\rho,n)$. Then,
\begin{equation}
    \Lambda_M \ge  \frac{4n-1}{n(n+2)}  \lambda_M+ \frac{(n-1)^2}{n(n+2)} \cdot \frac{\rho n}{n-1} 
\end{equation}
\end{thm}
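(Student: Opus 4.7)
The plan is to follow the same template as Theorem~\ref{thm: Thm 1} but with the specific curvature constants replaced by the abstract $CD(\rho,n)$ data and the spectral gap $\lambda_M$. First, I would introduce the four $f$-weighted moments
\[
A = \int_M f(\lap \log f)^2 \dd\mu, \quad B = \int_M f\,\lap \log f\,|\grad \log f|^2 \dd\mu, \quad C = \int_M f |\grad \log f|^4 \dd\mu, \quad D = \int_M f |\grad \log f|^2 \dd\mu,
\]
and use Bochner's formula together with an integration by parts to establish the algebraic identity $\int_M f\,\Gamma_2(\log f, \log f) \dd\mu = A + \tfrac{3}{2}B + \tfrac{1}{2}C$.

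Integrating the pointwise $CD(\rho, n)$ inequality $\Gamma_2(\log f, \log f) \ge (\lap \log f)^2/n + \rho |\grad \log f|^2$ against $f$ then yields
\[
A + \tfrac{3}{2}B + \tfrac{1}{2}C \ge \tfrac{A}{n} + \rho D.
\]
Applying the spectral-gap inequality $\int_M(\lap u)^2 \dd\mu \ge \lambda_M \int_M |\grad u|^2 \dd\mu$ with $u = \sqrt{f}$, and using $\lap \sqrt f = \tfrac{\sqrt{f}}{2}[\lap\log f + \tfrac12 |\grad \log f|^2]$, rewrites as
\[
A + B + \tfrac{1}{4}C \ge \lambda_M D.
\]

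The last ingredient is a Cauchy--Schwarz estimate on the traceless Hessian $\mathring{H} := \grad^2 \log f - \tfrac{\lap \log f}{n} I$. Integration by parts gives the identity $\int_M f\, \mathring{H}(\grad\log f, \grad\log f) \dd\mu = -\tfrac{(n+2)B + nC}{2n}$, which combined with the pointwise bound $|\mathring{H}(v,v)| \le \norm{\mathring{H}}\,|v|^2$ and Cauchy--Schwarz in the integral produces the further inequality $\int_M f \norm{\mathring{H}}^2 \dd\mu \ge \tfrac{((n+2)B+nC)^2}{4n^2 C}$. Together with the trace decomposition $\norm{\grad^2 \log f}^2 = (\lap\log f)^2/n + \norm{\mathring{H}}^2$, this upgrades the $CD$-derived lower bound to $\int_M f\,\Gamma_2(\log f, \log f)\dd\mu \ge \tfrac{A}{n} + \tfrac{((n+2)B + nC)^2}{4n^2 C} + \rho D$.

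The hard part is then the algebraic/optimization step that extracts the sharp constant $\tfrac{(4n-1)\lambda_M + n(n-1)\rho}{n(n+2)}$ from this constrained system on $(A,B,C,D)$. A naive convex combination of only the integrated $CD$ and spectral-gap bounds produces a strictly weaker coefficient of $\lambda_M$, so the refined Cauchy--Schwarz on the traceless Hessian is essential. I expect the extremal configuration to simultaneously saturate the spectral-gap inequality and the traceless-Hessian Cauchy--Schwarz estimate, and the remaining degrees of freedom in $(A,B,C,D)$ can then be eliminated through the Bochner identity, yielding the stated bound after a quadratic computation in which the coefficient $\tfrac{(n-1)^2}{n(n+2)}$ on the Lichnerowicz term $\tfrac{\rho n}{n-1}$ emerges directly from the quadratic structure of the Cauchy--Schwarz constraint.
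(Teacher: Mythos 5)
Your setup is correct as far as it goes: the Bochner/IBP identity $\int_M f\,\Gamma_2(\log f,\log f)\dd\mu=A+\tfrac32 B+\tfrac12 C$ and the identity $\int_M f\,\mathring H(\grad\log f,\grad\log f)\dd\mu=-\tfrac{(n+2)B+nC}{2n}$ both check out. The gap is in the Cauchy--Schwarz step and in the unexecuted final optimization. For a \emph{traceless} symmetric tensor the sharp pointwise bound is $|\mathring H(v,v)|\le\sqrt{\tfrac{n-1}{n}}\,\norm{\mathring H}\,|v|^2$, not $\norm{\mathring H}\,|v|^2$; your version only yields $P:=\int_M f\norm{\mathring H}^2\dd\mu\ge\tfrac{((n+2)B+nC)^2}{4n^2C}$ instead of the sharp $\tfrac{((n+2)B+nC)^2}{4n(n-1)C}$, and this loss is fatal for the claimed constant. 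Indeed, the moment system you propose to optimize, namely $A+\tfrac32B+\tfrac12C\ge\tfrac{A}{n}+\tfrac{((n+2)B+nC)^2}{4n^2C}+\rho D$ together with $A+B+\tfrac14C\ge\lambda_M D$, does \emph{not} imply $A+\tfrac32B+\tfrac12C\ge\bigl(\tfrac{4n-1}{n(n+2)}\lambda_M+\tfrac{(n-1)\rho}{n+2}\bigr)D$: take $n=2$, $\rho=1$, $\lambda_M=6$ (the $\RP^2$ case, target $5.5$) and the admissible vector $(A,B,C,D)=(8,\,-2\sqrt2,\,8\sqrt2-8,\,1)$, which satisfies both constraints with equality (and the obvious side constraints $A,C\ge0$, $B^2\le AC$, $C\ge D^2$), yet has $A+\tfrac32B+\tfrac12C=4+\sqrt2\approx5.414<5.5$. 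So the "quadratic computation" you defer to the end cannot produce the stated coefficient of $\lambda_M$ from your inequalities; the constant does not simply "emerge."

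The fix is exactly the factor you dropped: with $|\mathring H(v,v)|\le\sqrt{\tfrac{n-1}{n}}\,\norm{\mathring H}\,|v|^2$ the bound becomes $P\ge\tfrac{((n+2)B+nC)^2}{4n(n-1)C}$, and then the combination $T-\tfrac{n-1}{n+2}L_1-\tfrac{4n-1}{n(n+2)}L_2$ (where $L_1$, $L_2$ denote the left-hand sides of the two constraints) is pointwise nonnegative in $(A,B,C)$, giving precisely the theorem; you would also need to note that under $CD(\rho,n)$ one only has $\dim M\le n$, so the trace decomposition $\norm{\grad^2\log f}^2\ge\tfrac{(\lap\log f)^2}{n}+\norm{\mathring H}^2$ should be stated as an inequality. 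With that repair your route is a legitimate (dual) reformulation of the paper's argument: the paper instead applies the $CD(\rho,n)$ inequality to the family $g=f^{\beta+1}$, multiplies by $f^{-2\beta-1}$, integrates by parts, takes a convex combination with the spectral-gap inequality for $\sqrt f$, and optimizes $\beta=-\tfrac{2n-1}{2n-2}$; optimizing over $\beta$ there is exactly equivalent to the sharp traceless-Hessian Cauchy--Schwarz here. As submitted, however, the proposal both uses a quantitatively insufficient inequality and leaves the decisive optimization unproved, so it does not establish the theorem.
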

In particular, we recover Theorem \ref{thm: Thm 1} for $M=\RP^{d-1}$ by plugging in $(\lambda_d,\rho,n)=(2d,d-2,d-1)$.
It is obvious that Theorem \ref{thm: cd} implies the inequality \eqref{BE2}.
See \cite{bakry1994book2, bakry2014markov, bakry1994book1, fontenas1997constant, ledoux2000geometry, rothaus1981diffusion} for further discussion and examples on the logartihmic Sobolev inequality and the Curvature-Dimension condition. The stability of the logarithmic Sobolev inequality on $S^{d-1}$ is studied in the recent paper \cite{brigati2024logarithmic}, where they also consider the symmetric case as in this work.

\begin{remark}[The asymptotic behavior of $\Lambda_d$ ]
    
It is natural to ask the asymptotic behavior of $\Lambda_d$. This question can be precisely answered by the ordering \eqref{ineq: ordering} and the following theorem by Saloff-Coste \cite[Theorem 1.3]{saloff-coste1994precise}.
\[
\lim_{d\to \infty}\frac{\alpha_d}{d}=\lim_{d\to \infty} \frac{2\alpha_d}{\lambda_d}=1.
\]
Hence, the ordering \eqref{ineq: ordering} implies that
\begin{equation}
    \limsup_{d\to \infty} \frac{\Lambda_d}{d} \le 1.
\end{equation} 
Combined with the inequality \eqref{eq: asymp}, the precise asymptotic behavior of $\Lambda_d$ immediately follows. That is,

    \begin{equation}
        \lim_{d\to \infty} \frac{\Lambda_d}{d}=1.
    \end{equation}
\end{remark}

\subsection{Interpretation using the heat flow}
Now, we present the point of view that relates the inequalities \eqref{eq: 2nd logSobolev} and \eqref{eq: log Sobolev} via the heat flow of $f$ on $S^{d-1}$.
We can assume that $f$ is a probability density function on $S^{d-1}$, i.e.,
\begin{equation}
    \int_{S^{d-1}} f \dd \sigma =1,
\end{equation}
since both inequalities \eqref{eq: 2nd logSobolev} and \eqref{eq: log Sobolev} are invariant under scaling. We define the entropy functional and the Fisher information for the probability density $f$.
\begin{defn}[The entropy functional]
    \begin{equation}
        h(f):=\int_{S^{d-1}} f \log f \dd \sigma.
    \end{equation}
\end{defn}
\begin{defn}[The Fisher information]
\begin{equation} \label{eq: def}
    i(f):= \int_{S^{d-1}} f |\grad_\sigma \log f|^2 \dd \sigma = \int_{S^{d-1}} \frac{|\grad_\sigma f|^2}{f} \dd \sigma = 4 \int_{S^{d-1}} |\grad_\sigma \sqrt{f}|^2 \dd \sigma.
\end{equation}
\end{defn}
We remark that a positive symmetric function $f$ stays positive and symmetric along the heat flow $\partial_t f =\lap_\sigma f$. The time derivative of the entropy is the negative of the Fisher information. That is,
\begin{equation}
    \partial_t h(f) =-i(f).
\end{equation}
Hence, the inequality \eqref{eq: log Sobolev} provides how to bound the entropy from above by its time derivative, the Fisher information. Analogously, the time derivative of the Fisher information along the heat flow is negative two times the left hand side of \eqref{eq: 2nd logSobolev}. That is,
\begin{equation}
    \partial_t i(f)=-2 \int_{S^{d-1}} f( \norm{\grad_\sigma^2 \log f}^2 + (d-2)|\grad_\sigma \log f|^2) \dd \sigma.
\end{equation}
Therefore, the inequality \eqref{eq: 2nd logSobolev} describes how to bound the Fisher information from above by its time derivative. This is why we do not absorb the second term of the left hand side of \eqref{eq: second log Sobolev form} to the right hand side. We can rewrite the inequality \eqref{eq: 2nd logSobolev} as
\begin{equation} \label{ineq: heat}
    -\frac12 \partial_t i(f) \ge -\Lambda_d \cdot\partial_t h(f).
\end{equation}\\
This interpretation gives a short proof of the ordering 
\eqref{ordering1}. Consider the heat equation on $S^{d-1}$ with the initial data $f$. That is,
\begin{equation}
    \begin{cases}
        \partial_t f =\lap_\sigma f,\\
        f(0)=f.
    \end{cases}
\end{equation}
As time $t$ goes to $+\infty$, the function $f(t)$ converges to the constant function $1$. Subsequently, the quantities $h(f(t))$ and $i(f(t))$ converge to $0$ as $t\to \infty$.  Integrating the inequality \eqref{ineq: heat} from $t=0$ to $+\infty$, we gain
\begin{equation}
    \frac12 \left( i(f(0))-i(f(\infty))\right) \ge \Lambda_d(h(f(0))-h(f(\infty))),
\end{equation}
which is equivalent to the logarithmic Sobolev inequality \eqref{eq: log Sobolev} with $c=\Lambda_d$.
\begin{remark}
    The ordering \eqref{ordering1} can be shown in a different way. Consider a variational formulation for \eqref{eq: log Sobolev2}. It is known that for every $\tilde{\alpha}>\alpha_d$ that there exists a nonconstant, strictly positive solution $\tilde{g}$ for
    \begin{equation} \label{eq: variation}
       \frac{ \lap_\sigma \tilde{g}}{\tilde{g}} + \tilde{\alpha}  \log \tilde{g}=0.
    \end{equation}
    Denote $\tilde{f}:=\tilde{g}^2$, then \eqref{eq: variation} is equivalent to
    \begin{equation} \label{eq: variation2}
         \lap_\sigma \log (\tilde{f}) +\frac12 |\grad_\sigma \log \tilde{f}|^2 + \tilde{\alpha}  \log \tilde{f}=0.
    \end{equation}
    Multiply $\lap_\sigma \tilde{f}$ on both sides of \eqref{eq: variation2} and integrate over $S^{d-1}$, then it is easy to check 
    \begin{equation}
         \int_{S^{d-1}} \tilde{f} \Gamma_2(\log \tilde{f},\log \tilde{f} )\dd \sigma = \tilde{\alpha} i(\tilde{f}).
    \end{equation}
    It implies $\tilde{\alpha} \ge \Lambda_d$, therefore $\alpha_d \ge \Lambda_d$.

\end{remark}
\begin{remark}
    There is a known example such that $\alpha_M >\Lambda_M$. In other words, the Bakry-\'Emery $\Gamma_2$ criterion is strictly stronger than the logarithmic Sobolev inequality. See \cite{cecile2000sur, helffer2002example} for the example.
\end{remark}



\section{The lower bound for $\Lambda_d$}
Our strategy of proving the lower bound is inspired by the proofs of the inequality \eqref{BE2} by Bakry and \'Emery \cite{bakry1985emery} and the inequality \eqref{eq: Rothaus} by Rothaus \cite{rothaus1986hypercontractivity}.
Recall the Bochner's formula:
\begin{equation}
    \norm{\grad_\sigma^2 g}^2+Ric(\grad_\sigma g,\grad_\sigma g)=\frac{1}{2} \lap_\sigma (|\grad_\sigma g|^2)-\grad_\sigma \lap_\sigma g \cdot \grad_\sigma g
\end{equation}
 Since the Ricci curvature of $(S^{d-1},g_{round})$ is $(d-2)g_{round}$, we deduce that the left hand side of \eqref{eq: 2nd logSobolev} is equal to
\begin{equation} \label{eq: Bochner}
    \int_{S^{d-1}} f\left( \frac{1}{2} \lap_\sigma (|\grad_\sigma \log f|^2)-\grad_\sigma \lap_\sigma \log f \cdot \grad_\sigma \log f\right) \dd \sigma=\int_{S^{d-1}}\left( \frac{1}{2} \lap_\sigma f |\grad_\sigma \log f|^2 + \lap_\sigma f \lap_\sigma \log f \right) \dd \sigma.
\end{equation}
Note that the equality holds from integration by parts. For our convenience, we denote
\[
A:=\lap_\sigma \log f,
\]
\[
B:=|\grad_\sigma \log f|^2.
\]
Then,
\[
\frac{\lap_\sigma f}{f}= \lap_\sigma \log f+ |\grad_\sigma \log f|^2= A+B.
\]
With the new notations, the equation \eqref{eq: Bochner} is equal to
\begin{equation} \label{eq: our goal}
    \int_{S^{d-1}} f \left( A+B \right) \left(A+\frac12 B \right) \dd \sigma.
\end{equation}

On the other hand, we have the following formula:
\begin{align} \label{eq: H2norm}
    \int_{S^{d-1}} (\lap_\sigma \sqrt{f})^2 \dd \sigma 
    &=\frac14 \int_{S^{d-1}} \left( \grad_\sigma \cdot \left(\frac{\grad_\sigma f}{\sqrt{f}} \right)\right)^2 \dd \sigma\\
    &=\frac14 \int_{S^{d-1}}  f\left( \frac{\lap_\sigma f}{f}-\frac12 |\grad_\sigma \log f|^2 \right)^2 \dd \sigma\\
    &=\frac14 \int_{S^{d-1}}f \left(A+\frac12 B\right)^2 \dd \sigma.
\end{align}

\begin{lemma}[The Poincar\'e inequality on $S^{d-1}$] \label{lem: Poincare}
    Let $g:S^{d-1}\to \R$ be a symmetric function with average zero, then,
    \begin{equation}\label{eq: Poincare}
        \int_{S^{d-1}} |\grad_\sigma g|^2 \dd \sigma \ge 2d \int_{S^{d-1}}  g^2 \dd \sigma.
    \end{equation}
\end{lemma}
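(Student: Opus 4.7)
The plan is to prove this by decomposing $g$ into spherical harmonics and using the symmetry and mean-zero assumptions to rule out the low modes. Recall that $L^2(S^{d-1})$ admits the orthogonal decomposition $L^2(S^{d-1}) = \bigoplus_{k \ge 0} \mathcal{H}_k$, where $\mathcal{H}_k$ is the space of spherical harmonics of degree $k$ (restrictions to $S^{d-1}$ of harmonic homogeneous polynomials of degree $k$ on $\R^d$). Each $\mathcal{H}_k$ is an eigenspace of $-\Delta_\sigma$ with eigenvalue $k(k+d-2)$.

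First I would write $g = \sum_{k \ge 0} g_k$ with $g_k \in \mathcal{H}_k$. The mean-zero hypothesis $\int_{S^{d-1}} g\, d\sigma = 0$ kills the $k=0$ component, so $g_0 = 0$. Next, since every element of $\mathcal{H}_k$ is the restriction of a homogeneous polynomial of degree $k$, any $h \in \mathcal{H}_k$ satisfies $h(-\sigma) = (-1)^k h(\sigma)$. The symmetry assumption $g(-\sigma) = g(\sigma)$ therefore forces $g_k = 0$ for all odd $k$. Combining these observations, only the components with $k \ge 2$ and $k$ even can be nonzero.

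Then I would use orthogonality in $L^2$ together with the eigenvalue identity $-\Delta_\sigma g_k = k(k+d-2) g_k$ and integration by parts to obtain
\begin{equation}
\int_{S^{d-1}} |\grad_\sigma g|^2 \dd \sigma = \sum_{k \ge 2,\, k \text{ even}} k(k+d-2) \int_{S^{d-1}} g_k^2 \dd \sigma,
\qquad
\int_{S^{d-1}} g^2 \dd \sigma = \sum_{k \ge 2,\, k \text{ even}} \int_{S^{d-1}} g_k^2 \dd \sigma.
\end{equation}
Since the map $k \mapsto k(k+d-2)$ is increasing in $k \ge 0$, the minimum of $k(k+d-2)$ over the admissible indices is attained at $k=2$, yielding the value $2(2+d-2) = 2d$. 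Comparing the two sums term by term gives the desired inequality, with equality precisely when $g \in \mathcal{H}_2$.

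There is no real obstacle here; the proof is just bookkeeping on the spherical harmonic decomposition. The only point worth underlining is that the constant $2d$ is strictly larger than the classical first nonzero eigenvalue $d-1$ (associated with the coordinate functions in $\mathcal{H}_1$), and this gain of the factor roughly $2$ is exactly what the symmetry assumption buys us.
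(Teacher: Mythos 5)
Your proof is correct and follows essentially the same route as the paper: both decompose $g$ into spherical harmonics with eigenvalues $k(k+d-2)$, use the mean-zero condition to remove $k=0$ and the symmetry (parity $(-1)^k$ of degree-$k$ harmonics) to remove all odd $k$, so the smallest surviving eigenvalue is $2d$ at $k=2$. Nothing further is needed.
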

\begin{proof}
    This is a fact regarding the eigenvalues of $-\lap_\sigma$ of $S^{d-1}$. We briefly explain the reasoning. The eigenfunctions of $-\lap_\sigma$ correspond to spherical harmonics. Let's assume that $P$ is a spherical harmonic polynomial of degree $k$. Thus, $P(r\sigma)=r^k P(\sigma)$ for $\sigma \in S^{d-1}$. Then, $P$ is a harmonic polynomial on $\R^d$, so
    \begin{equation}
    \lap_{\R^d} P=\partial_{rr} P+\frac{d-1}{r}\partial_r P +\frac{1}{r^2} \lap_\sigma P=0.
\end{equation}
It follows that
\begin{equation}
    0=k(k-1)r^{k-2} P +(d-1)k r^{k-2} P+r^{k-2} \lap_\sigma P \Rightarrow -\lap_\sigma P= k(d+k-2)P.
\end{equation}
Hence, the eigenvalues of $-\lap_\sigma$ for $S^{d-1}$ are in form of $k(d+k-2)$ for $k=0,1,2,\cdots$. The first eigenvalue $0$ corresponds to constant functions and the second eigenvalue $d-1$ corresponds to spherical harmonic polynomials of degree $1$. Since $g$ is average zero and symmetric, it is orthogonal to $L^2(S^{d-1})$ to the eigenspaces that correspond to the first and second eigenvalues. Therefore, the inequality \eqref{eq: Poincare} holds for the third eigenvalue $2d$.
\end{proof}
Since $\grad_\sigma \sqrt{f}$ is symmetric and has  average $0$, the following holds:
\begin{cor} \label{cor: Poincare}
    \begin{equation}
    \int_{S^{d-1}} (\lap_\sigma \sqrt{f})^2 \dd \sigma \ge 2d \int_{S^{d-1}} |\grad_\sigma \sqrt{f}|^2 \dd \sigma=\frac{d}{2}\cdot  i(f).
\end{equation}
\end{cor}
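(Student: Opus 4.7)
The plan is to deduce the claimed inequality from Lemma \ref{lem: Poincare} by an integration-by-parts plus Cauchy--Schwarz argument. First I would write $\sqrt{f} = c + g$, where $c = \int_{S^{d-1}} \sqrt{f} \dd \sigma$ is the mean and $g$ has average zero. Since $f$ is symmetric and positive, so is $\sqrt{f}$; hence $g$ is a symmetric mean-zero function on $S^{d-1}$ to which Lemma \ref{lem: Poincare} applies, yielding
\begin{equation}
    \int_{S^{d-1}} |\grad_\sigma g|^2 \dd \sigma \ge 2d \int_{S^{d-1}} g^2 \dd \sigma.
\end{equation}
Moreover $\grad_\sigma \sqrt{f} = \grad_\sigma g$ and $\lap_\sigma \sqrt{f} = \lap_\sigma g$, so the target inequality reduces to proving $\int (\lap_\sigma g)^2 \dd \sigma \ge 2d \int |\grad_\sigma g|^2 \dd \sigma$.

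Next, I would integrate by parts on the closed manifold $S^{d-1}$ and apply Cauchy--Schwarz to obtain
\begin{equation}
    \int_{S^{d-1}} |\grad_\sigma g|^2 \dd \sigma = -\int_{S^{d-1}} g \lap_\sigma g \dd \sigma \le \left(\int_{S^{d-1}} g^2 \dd \sigma\right)^{1/2} \left(\int_{S^{d-1}} (\lap_\sigma g)^2 \dd \sigma\right)^{1/2}.
\end{equation}
Inserting the bound $\bigl(\int g^2\bigr)^{1/2} \le (2d)^{-1/2} \bigl(\int |\grad_\sigma g|^2\bigr)^{1/2}$ from Lemma \ref{lem: Poincare} and dividing through by $\bigl(\int |\grad_\sigma g|^2\bigr)^{1/2}$ gives $\int (\lap_\sigma g)^2 \dd \sigma \ge 2d \int |\grad_\sigma g|^2 \dd \sigma$, which is the main inequality. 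The final equality $2d \int |\grad_\sigma \sqrt{f}|^2 \dd \sigma = \tfrac{d}{2} i(f)$ is then immediate from the Fisher information identity \eqref{eq: def}.

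There is no real obstacle here: the only subtlety is the need to center $\sqrt{f}$ by subtracting its mean before invoking Lemma \ref{lem: Poincare}, but this is harmless since $\grad_\sigma$ and $\lap_\sigma$ annihilate constants. As a sanity check, one could equivalently argue mode by mode via spherical harmonics: the symmetry of $\sqrt{f}$ forces only even-degree components $Y_k$ with $k \in \{0, 2, 4, \dots\}$ to appear, the $k = 0$ mode is killed by both $\grad_\sigma$ and $\lap_\sigma$, and for each remaining mode the eigenvalue relation $-\lap_\sigma Y_k = k(k+d-2) Y_k$ gives $k(k+d-2) \ge 2d$ with equality at $k = 2$. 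This yields the same inequality and, as a bonus, identifies the extremizers.
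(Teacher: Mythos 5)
Your proof is correct. The paper's own justification is essentially a one-liner: it points out that the relevant part of $\sqrt{f}$ is symmetric with zero average and reads the corollary off from the spectral fact behind Lemma \ref{lem: Poincare}, namely that on even mean-zero functions the spectrum of $-\lap_\sigma$ starts at $2d$; that is exactly your ``sanity check'' argument, where the inequality follows mode by mode from $\lambda_k^2 \ge 2d\,\lambda_k$ for even $k\ge 2$ (and which, as you note, also identifies the extremizers --- the ones the paper later exploits via $h=(z^2+t)^2$). Your primary route is genuinely different in flavor: you center $\sqrt{f}$, write $\int|\grad_\sigma g|^2 = -\int g\,\lap_\sigma g$, apply Cauchy--Schwarz, and then feed Lemma \ref{lem: Poincare} back in to bootstrap the first-order Poincar\'e inequality into the second-order one $\int(\lap_\sigma g)^2 \ge 2d\int|\grad_\sigma g|^2$. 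This treats the lemma as a black box, never reopens the spherical-harmonic expansion, and transfers verbatim to any closed manifold with a spectral gap on a $\lap$-invariant subspace --- which is precisely the form needed in the sketch of Theorem \ref{thm: cd}, where the constant becomes $\lambda_M$; the trade-off is that it loses the explicit description of equality cases that the spectral argument provides. Two minor points to keep in the write-up: the division by $\bigl(\int|\grad_\sigma g|^2\bigr)^{1/2}$ should be accompanied by the remark that the claim is trivial when $\grad_\sigma g\equiv 0$, and the final identity $2d\int|\grad_\sigma\sqrt{f}|^2=\tfrac{d}{2}\,i(f)$ is, as you say, immediate from \eqref{eq: def}.
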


From the equation \eqref{eq: H2norm}, Corollary \ref{cor: Poincare} is equivalent to
\begin{equation} \label{ineq: ineq2}
    \int_{S^{d-1}} f \left(A+\frac12 B \right)^2 \dd \sigma \ge 2d \cdot i(f).
\end{equation}
Recall that $S^{d-1}$ satisfies the Curvature-Dimension condition $CD(d-2,d-1)$ and this is equivalent to \eqref{eq: cd condtion}: 
\begin{equation}
    \norm{\grad_\sigma^2 g}^2 \ge \frac{1}{d-1} (\lap_\sigma g)^2.
\end{equation}
Therefore, for $g=f^{\beta+1}$, we have
\begin{equation} \label{eq: CD}
    \frac{1}{(\beta+1)^2}   \norm{\grad_\sigma^2 f^{\beta+1}}^2 \ge \frac{1}{(d-1)(\beta+1)^2} (\lap_\sigma f^{\beta+1})^2
\end{equation}
holds for every $\beta \in \R$.
Note that we recover the case $g=\log f$ by sending $\beta\to -1$, which is
\begin{equation} \label{ineq: logf}
    \norm{\grad_\sigma^2 \log f}^2 \ge \frac{1}{d-1} (\lap_\sigma \log f)^2.
\end{equation}
\begin{proof}[Proof of Theorem \ref{thm: Thm 1}]
Our goal is to optimize $\beta$ of \eqref{eq: CD} together with \eqref{ineq: ineq2} to estimate \eqref{eq: our goal}.
The following identities will be useful for our computation:
\begin{equation}
     \frac{\lap_\sigma f^{\beta+1}}{f^{\beta+1}} =\lap_\sigma \log f^{\beta+1} +|\grad_\sigma \log f^{\beta+1}|^2=(\beta+1) \lap_\sigma \log f +(\beta+1)^2 |\grad_\sigma \log f|^2,
\end{equation}
\begin{equation}
    \frac{\grad_\sigma f^{\beta+1}}{f^{\beta+1}}= \grad_\sigma \log f^{\beta+1}=(\beta+1) \grad_\sigma \log f.
\end{equation}
Multiply $f^{-2\beta-1}$ to \eqref{eq: CD} and integrate over $S^{d-1}$, then the right hand side of \eqref{eq: CD} becomes
\begin{equation}
    \frac{1}{d-1} \int_{S^{d-1}} f(A+(\beta+1)B)^2 \dd \sigma.
\end{equation}
By the Bochner's formula and integrating by parts, the left hand side of \eqref{eq: CD} becomes
\begin{align}
   &\frac{1}{(\beta+1)^2}  \int_{S^{d-1}} \left[ \frac12 f^{-2\beta-1}  \lap_\sigma (|\grad_\sigma f^{\beta+1}|^2)  -f^{-2\beta-1}\grad_\sigma \lap_\sigma f^{\beta+1} \cdot \grad_\sigma f^{\beta+1}-(d-2) f^{-2\beta-1} |\grad_\sigma f^{\beta+1}|^2 \right] \dd \sigma\\
    =&\frac{1}{(\beta+1)^2}  \int_{S^{d-1}} \left[ \frac12 \lap_\sigma f^{-2\beta-1}   |\grad_\sigma f^{\beta+1}|^2  +\frac{\beta+1}{\beta} \grad_\sigma \lap_\sigma f^{\beta+1} \cdot \grad_\sigma f^{-\beta}-(d-2) f^{-2\beta-1} |\grad_\sigma f^{\beta+1}|^2 \right] \dd \sigma\\
     =&  \int_{S^{d-1}} f \left[ \frac{-2\beta-1}{2} (\lap_\sigma \log f -(2\beta+1) |\grad_\sigma \log f|^2)  |\grad_\sigma \log f|^2 \right. \\
     &+ \left. (\lap_\sigma \log f +(\beta+1) |\grad_\sigma \log f|^2)(\lap_\sigma \log f -\beta |\grad_\sigma \log f|^2)-(d-2) f|\grad_\sigma \log f|^2 \right] \dd \sigma.
\end{align}
Using the definitions of $A$ and $B$, it simplifies to
\begin{equation}
    =\int_{S^{d-1}} f \left( A^2- \left(\beta-\frac12\right) AB+ \left(\beta^2+\beta+\frac12\right) B^2 \right) -(d-2)i(f).
\end{equation}
Hence, we obtain
\begin{equation} \label{eq: compute}
     \int_{S^{d-1}} f \left( A^2- \left(\beta-\frac12\right) AB+ \left(\beta^2+\beta+\frac12\right) B^2 \right) \dd \sigma -(d-2)i(f) \ge \frac{1}{d-1}\int_{S^{d-1}}f (A+(\beta+1) B)^2 \dd \sigma .
\end{equation}
Rearrange the terms of \eqref{eq: compute} and then multiply $\frac{d-1}{d-2}$ on both sides: 
\begin{equation} \label{ineq: ineq1}
    \int_{S^{d-1}} f \left( A^2-\left(\frac{d+1}{d-2}\beta-\frac{d-5}{2(d-2)} \right)AB+\left(\beta^2+\frac{d-3}{d-2}\beta+\frac{d-3}{2(d-2)} \right)B^2 \right) \dd \sigma \ge (d-1)i(f).
\end{equation}

Recall that our goal is to control the quantity \eqref{eq: our goal} with the inequalities \eqref{ineq: ineq2} and \eqref{ineq: ineq1}. We choose a parameter $ \theta_d(\beta) \in [0,1]$ so that
\begin{equation} \label{eq: linear combination}
    \theta_d(\beta)  \int_{S^{d-1}} f \left( A^2-\left(\frac{d+1}{d-2}\beta-\frac{d-5}{2(d-2)} \right)AB+\left(\beta^2+\frac{d-3}{d-2}\beta+\frac{d-3}{2(d-2)} \right)B^2 \right) \dd \sigma +(1-\theta_d(\beta)) \left(A+\frac12 B\right)^2
\end{equation}
is equal to
\begin{equation}
    (A+B)\left(A+\frac12 B\right) -\tau_d(\beta)\cdot B^2
\end{equation}
for some $\tau_d(\beta) \ge 0$. A straightforward computation shows that
\begin{equation}
    \theta_d(\beta)=-\frac{d-2}{d+1} \cdot \frac{1}{2\beta+1}, \quad \tau_d(\beta)=\frac{2(d-2)\beta+2d-3}{4(d+1)}. 
\end{equation}
In particular, if $d=2$, we have $\theta_2(\beta)=0$ and $\tau_2(\beta)=\frac{1}{12}$ for any $\beta$. 
For $d\ge 3$, we need $\beta$ to satisfy the condition
\begin{equation}  \label{eq: beta condition}
    -\frac{2d-3}{2d-4} \le  \beta \le -\frac12
\end{equation}
to guarantee $0\le \theta_d(\beta) \le 1$ and $0 \le \tau_d(\beta)$. 
Bootstrapping \eqref{ineq: ineq2} and \eqref{ineq: ineq1}, we obtain
\begin{equation}
   \int_{S^{d-1}} f \left( (A+B)\left(A+\frac12 B\right) -\tau_d(\beta)\cdot B^2 \right) \dd \sigma \ge \theta_d(\beta)  (d-1) i(f) + (1-\theta_d(\beta))  2d \cdot  i(f)= \left(2d+\frac{d-2}{2\beta+1} \right) i(f).
\end{equation}
It follows that
\begin{equation} \label{ineq: ineq3}
    \int_{S^{d-1}} f(A+B)\left(A+\frac12 B\right) \dd \sigma \ge  \left(2d+\frac{d-2}{2\beta+1} \right) i(f)
\end{equation}
for every $\beta$ satisfying \eqref{eq: beta condition}.
The right hand side of \eqref{ineq: ineq3} achieves its maximum value
\begin{equation} \label{eq: answer}
    \left(2d-\frac{(d-2)^2}{d-1} \right) i(f) = \left(d+3-\frac{1}{d-1} \right) i(f).
\end{equation}
when
\[
\beta=-\frac{2d-3}{2d-4}.
\]
Note that our choice of $\beta $ is $-\frac32 $ for $d=3$.

\end{proof}
\begin{remark}   
 For $d=2$, some of the computation above is invalid, but essentially the same computation works and \eqref{ineq: ineq3} remains true. The right hand side of \eqref{eq: answer} becomes $4i(f)$, thus $\Lambda_2 \ge 4$. 
\end{remark}
\begin{proof}[Proof sketch of Theorem \ref{thm: cd}]
Corollary \ref{cor: Poincare} becomes 
\begin{equation}
     \int_{M} f \left(A+\frac12 B \right)^2 \dd \sigma \ge \lambda_M  i(f)
\end{equation}
and the inequality \eqref{ineq: ineq1} becomes
\begin{equation}
     \int_{M} f \left( A^2-\left(\frac{n+2}{n-1}\beta-\frac{n-4}{2(n-1)} \right)AB+\left(\beta^2+\frac{n-2}{n-1}\beta+\frac{n-2}{2(n-1)} \right)B^2 \right) \dd \sigma \ge \frac{\rho n}{n-1}i(f).
\end{equation}
Similar to the proof of Theorem \ref{thm: Thm 1}, we choose
\begin{equation}
    \theta_M(\beta)=-\frac{n-1}{n+2}\cdot \frac{1}{2\beta+1}
\end{equation}
and 
\begin{equation}
    \tau_M(\beta)=\frac{2(n-1)\beta+2n-1}{4n}.
\end{equation}
We need $\beta$ to satisfy
\begin{equation}
    -\frac{2n-1}{2n-2} \le \beta \le -\frac12. 
\end{equation}
Therefore,
\begin{align}
   \int_M f (A+B)\left(A+\frac12 B \right) \dd \sigma 
    &\ge \theta_M(\beta)\cdot \frac{\rho n}{n-1} i(f)+(1-\theta_M(\beta))\lambda_M i(f)\\
    &=\left(\lambda_M + \frac{n-1}{n+2}\cdot \frac{1}{2\beta+1} \left( \lambda_M-\frac{\rho n }{n-1} \right) \right) i(f)\\
     &=\left (\lambda_M \cdot \frac{4n-1}{n(n+2)} + \frac{\rho n}{n-1} \cdot\frac{(n-1)^2}{n(n+2)} \right) i(f).
\end{align}
for $\beta=-\frac{2n-1}{2n-2}$.
\end{proof}
 
\begin{remark}
    As we mentioned earlier, $\beta=-1$ corresponds to the case $g=\log f$. This is how Guillen and Silvestre prove $\Lambda_3 \ge \frac{19}{4}$ in \cite{guillen2023global}. For instance, they are essentially choosing $\theta_3(-1)=\frac14$ and $\tau_3(-1)=\frac{1}{12}$.
    However, the proof of this section yields $\Lambda_3 \ge 5$ for $\beta=-1$, which is an improvement of their result. We briefly explain why this is the case. In this remark, we adopt the definitions in \cite{guillen2023global}, e.g., vector fields $b_1,b_2,b_3$.
     
         Suppose $A(\sigma)$ be a matrix given by $A_{ij}(\sigma)=b_i\cdot \grad( b_j\cdot \grad \log f)$ for $\sigma \in S^2$. Then, Lemma 9.9 of \cite{guillen2023global} can be improved to
        \begin{equation}\label{lem: improvement}
            \norm{M(\sigma)}^2 \ge \frac{1}{2} \tr(M(\sigma))^2 +\norm{N(\sigma)}^2 
        \end{equation}
        where $M(\sigma)$ and $N(\sigma)$ are the symmetric and anti-symmetric parts of $A(\sigma)$, respectively.
        An additional term $\norm{N}^2$ is added in comparison to the original Lemma 9.9 of \cite{guillen2023global}. We stress that \eqref{lem: improvement} does not hold for a generic matrix with rank at most $2$. In other words, the matrix $A(\sigma)$ is subject to more restrictive conditions than simply not being of full rank.
        If we rewrite \eqref{lem: improvement} using intrinsic geometric objects, then we get
        \begin{equation}
             \norm{\grad_\sigma^2 \log f }^2+ \frac{1}{2}|\grad_\sigma \log f|^2 \ge \frac{1}{2} (\lap_\sigma \log f)^2+ \frac{1}{2} |\grad_\sigma \log f |^2,
        \end{equation}
        which is exactly \eqref{ineq: logf}.
        We also provide an alternative proof of \eqref{lem: improvement} that does not rely on the geometric notions. It can be easily verified \eqref{lem: improvement} is equivalent to
       \begin{equation} \label{eq: trace}
           \tr(A(\sigma)^2)\ge \frac12 \tr(A(\sigma))^2.
       \end{equation}
       Without loss of generality, we can assume $\sigma=e_1$. Then,
       \begin{equation}
           b_1(\sigma)=0, \quad b_2(\sigma)=-e_3, \quad b_3(\sigma)=e_2.
       \end{equation}
       It follows that the first column of $A(e_1)$ is a zero column and the $(1,1)-$minor of $A(e_1)$ is symmetric. The inequality \eqref{eq: trace} reduces to
       \begin{equation}
           A(e_1)_{22}^2+A(e_1)_{23}^2+A(e_1)_{33}^2 \ge \frac12 (A(e_1)_{22}+A(e_1)_{33})^2,
       \end{equation}
       which is trivial.

\end{remark}

\section{The upper bounds for $\Lambda_3$ and $\alpha_3$}
In this section, we prove the upper bounds for $\Lambda_3$ and $\alpha_3$ by plugging in the following class of functions 
\begin{equation} \label{h def}
h(x,y,z)=(z^2+t)^2
\end{equation}
defined on $S^2$ for $t>0$ to the inequalities \eqref{eq: 2nd logSobolev} and \eqref{eq: log Sobolev}. It is a natural choice of functions since $\sqrt{h(x,y,z)}=z^2+t$ satisfies the equality for Corollary \ref{cor: Poincare}. However, the equality for \eqref{eq: CD} is not satisfied for this class of functions.
Thus, it is possible that considering other classes of functions might give a better estimate for $\Lambda_3$ and $\alpha_3$. For instance, functions that satisfy the equality for Corollary \ref{cor: Poincare} other than $h$ or functions that satisfy the equality for \eqref{eq: CD} are potential candidates. We expect the same class of functions to provide an upper bound for $\Lambda_d $ and $\alpha_d$ for general $d\ge 4$, but we do not explore this further in this paper.
\begin{remark}
    The components of $\grad_\sigma \sqrt{h}$ are spherical harmonic polynomials of degree $2$, not $1$ since the spherical gradient on $S^2$ does not take the radial direction into account. This can be also observed by performing the computation of this section following the notions of Guillen and Silvestre in \cite{guillen2023global}.
\end{remark}

\begin{remark}
    Since the inequalities \eqref{eq: 2nd logSobolev} and \eqref{eq: log Sobolev} are scale invariant, we can plug in the function \[
    h(x,y,z)=\left(1+\frac{z^2}{t}\right)^2
    \]
    instead of $(z^2+t)^2$, and send $t \to \infty$. Then, we obtain $\lambda_3=6 \ge \Lambda_3$ and $\lambda_3=6 \ge \alpha_3$ from \eqref{eq: 2nd logSobolev} and \eqref{eq: log Sobolev}, respectively.
    The same logic can be applied for general dimension $d$ to observe $\lambda_d=2d \ge \Lambda_d$ and $\lambda_d =2d \ge \alpha_d$.
\end{remark}


Recall a spherical coordinate system for $\R^3$:
\begin{equation}
    x= r\cos \theta \sin \varphi, \quad y=r\sin \theta \sin \varphi, \quad z=r\cos \varphi
\end{equation}
for $r \in [0,+\infty), \varphi \in [0,\pi], \theta \in [0,2\pi]. $
The unit sphere $S^2$ corresponds to $r=1$.

\begin{prop} \label{prop: sph}
    With the definition \eqref{h def} of $h$, we have
    \begin{equation}
        -\lap_\sigma\sqrt{h}=6\left(z^2-\frac13 \right), \quad |\grad_\sigma \sqrt{h}|^2=4z^2(1-z^2).
    \end{equation}
\end{prop}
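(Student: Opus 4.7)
The plan is to reduce the two computations to identities for $z^2$ on $S^2$, since $\sqrt{h}(x,y,z)=z^2+t$ differs from $z^2$ only by a constant and therefore has the same spherical gradient and Laplacian. So the task is simply to compute $\nabla_\sigma(z^2)$ and $\Delta_\sigma(z^2)$ restricted to the unit sphere.

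For the gradient, I would use the identity $\nabla_\sigma f = \nabla f - (\nabla f\cdot\sigma)\sigma$ valid for any smooth $f$ on $\mathbb{R}^3$ restricted to $S^2$, where $\sigma=(x,y,z)$. Applying this to $f(x,y,z)=z^2$, the ambient gradient is $(0,0,2z)$, so the radial component is $\nabla f\cdot\sigma=2z^2$, and therefore
\begin{equation}
\nabla_\sigma(z^2)=(-2xz^2,\,-2yz^2,\,2z(1-z^2)).
\end{equation}
Squaring and using $x^2+y^2=1-z^2$ on $S^2$ gives $|\nabla_\sigma(z^2)|^2=4z^4(1-z^2)+4z^2(1-z^2)^2=4z^2(1-z^2)$, which is the second identity.

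For the Laplacian, I would use the decomposition
\begin{equation}
\Delta_{\mathbb{R}^3}F=\partial_{rr}F+\tfrac{2}{r}\partial_rF+\tfrac{1}{r^2}\Delta_\sigma F
\end{equation}
applied to $F(x,y,z)=z^2$. In spherical coordinates $F=r^2\cos^2\varphi$, so $\partial_r F=2r\cos^2\varphi$ and $\partial_{rr}F=2\cos^2\varphi$, while in Cartesian coordinates $\Delta_{\mathbb{R}^3}F=2$. Evaluating at $r=1$ and using $z=\cos\varphi$ yields $2=2z^2+4z^2+\Delta_\sigma(z^2)$, hence $\Delta_\sigma(z^2)=2-6z^2=-6(z^2-\tfrac13)$. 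Taking the negative and noting that adding the constant $t$ leaves this unchanged gives the first identity.

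There is no genuine obstacle here — both identities follow from direct calculations — the only thing to keep track of is the constant $t$ being annihilated by the first-order differential operators and the proper use of the radial/spherical decomposition of $\Delta_{\mathbb{R}^3}$. Alternatively one could verify both formulas using the spherical coordinate expression $|\nabla_\sigma f|^2=(\partial_\varphi f)^2+\frac{1}{\sin^2\varphi}(\partial_\theta f)^2$ and $\Delta_\sigma f = \frac{1}{\sin\varphi}\partial_\varphi(\sin\varphi\,\partial_\varphi f)+\frac{1}{\sin^2\varphi}\partial_{\theta\theta}f$ applied to $\cos^2\varphi+t$, which is independent of $\theta$; this is essentially the same computation and a convenient cross-check.
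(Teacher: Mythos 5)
Your proof is correct and follows essentially the same route as the paper: both compute $|\grad_\sigma\sqrt{h}|^2$ by subtracting the radial part of the ambient gradient (your projection formula $\grad_\sigma f=\grad f-(\grad f\cdot\sigma)\sigma$ is equivalent to the paper's identity $|\grad_\sigma f|^2=r^2(|\grad_{\R^3}f|^2-|\partial_r f|^2)$), and both obtain the Laplacian from the radial decomposition of $\lap_{\R^3}$. The only cosmetic difference is that the paper gets $-\lap_\sigma(z^2-\tfrac13)=6(z^2-\tfrac13)$ by recognizing it as a degree-$2$ spherical harmonic (eigenvalue $6$), whereas you compute $\lap_\sigma(z^2)$ directly from $\lap_{\R^3}(z^2)=2$; both are correct and rest on the same decomposition.
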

\begin{proof}
     Since $z^2-\frac{1}{3}$ is a spherical harmonic polynomial of degree $2$ and has average $0$, we have
\begin{equation} \label{formula: eigenfunction}
    -\lap_\sigma\left(z^2-\frac13\right)=-\lap_\sigma \sqrt{h}=6\left(z^2-\frac13 \right).
\end{equation}
On the other hand,
\begin{align}
    |\grad_\sigma \sqrt{h}|^2
    &=r^2 (|\grad_{\R^3} \sqrt{h}|^2-|\partial_r \sqrt{h}|^2)\\
    &=r^2 (4z^2-4r^2 \cos^4 \varphi)\\
    &=4z^2(r^2-z^2).
\end{align}
By evaluating at a point $(x,y,z)\in S^2$, where $r=1$, we reach the conclusion of the proposition.
\end{proof}

\begin{proof}[Proof of Theorem \ref{thm: upper}]
Now, we are ready to compute both sides of the \eqref{eq: 2nd logSobolev}.
First, we compute the Fisher information for $h$:
\begin{equation} \label{eq: Fisherh}
    i(h)=4\int_{S^2} -\lap_\sigma \sqrt{h}\cdot  \sqrt{h} \dd \sigma = 24\int_{S^2} \left(z^2-\frac13\right)^2 \dd \sigma=\frac{32}{15}.
\end{equation}
On the other hand,
\begin{align} 
    \int_{S^2}h \Gamma_2(\log h, \log h ) \dd \sigma
    &=\int_{S^2}  h \left( \lap_\sigma \log h+|\grad_\sigma \log h|^2 \right) \left( \lap_\sigma \log h+ \frac{1}{2} |\grad_\sigma \log h|^2\right) \dd \sigma\\
    &= \int_{S^2}  4h \left( \lap_\sigma \log \sqrt{h}+2|\grad_\sigma \log \sqrt{h}|^2 \right) \left( \lap_\sigma \log \sqrt{h}+  |\grad_\sigma \log \sqrt{h}|^2\right) \dd \sigma
\end{align}
is equal to 
\begin{align}
    \int_{S^2} 4h \left(\frac{\lap_\sigma{\sqrt{h}}}{\sqrt{h}}+\frac{|\grad_\sigma \sqrt{h}|^2}{h} \right) \frac{\lap_\sigma{\sqrt{h}}}{\sqrt{h}}\dd \sigma
    &=4\int_{S^2} \left(\lap_\sigma{\sqrt{h}}+\frac{|\grad_\sigma \sqrt{h}|^2}{\sqrt{h}} \right) \lap_\sigma{\sqrt{h}} \dd \sigma\\
    &=24\int_{S^2} \left(z^2-\frac13\right) \left[6\left(z^2-\frac13\right)- \frac{4z^2(1-z^2)}{z^2+t} \right] \dd \sigma.
\end{align}
From \eqref{eq: 2nd logSobolev}, we get the estimate
\begin{equation} 
    (6-\Lambda_3) \int_{S^2} \left(z^2-\frac13\right)^2 \dd \sigma 
    \ge -\frac43\int_{S^2} \frac{z^2(1-z^2)(1-3z^2)}{z^2+t} \dd \sigma,
\end{equation}
which is equivalent to
\begin{equation} \label{eq: example LHS}
    \Lambda_3 \le 6+15 \int_{S^2} \frac{z^2(1-z^2)(1-3z^2)}{z^2+t} \dd \sigma.
\end{equation}
Since
\begin{equation}
    z^2(1-z^2)(1-3z^2)=(z^2+t)(3z^4-(3t+4)z^2+(t+1)(3t+1))-t(t+1)(3t+1),
\end{equation}
the right hand side of \eqref{eq: example LHS} equals to
\begin{align}
    &6+9-5(3t+4)+15(t+1)(3t+1)-15 t(t+1)(3t+1)\int_{S^2}\frac{1}{z^2+t} \dd \sigma\\
    &=5 (3t+1)(3t+2) - 15(t+1)(3t+1)\sqrt{t}\arctan\left(\frac{1}{\sqrt{t}}\right).
\end{align}
Note that we used an elementary integration
\begin{equation}
    \int_{S^2}\frac{1}{z^2+t} \dd \sigma =\frac{1}{4\pi} \int_{-1}^1 \frac{2\pi}{z^2+t} \dd z=\int_0^1 \frac{1}{z^2+t} \dd z=\left[\frac{1}{\sqrt{t}}\arctan\left(\frac{z}{\sqrt{t}}\right)\right]_0^1=\frac{1}{\sqrt{t}}\arctan\left(\frac{1}{\sqrt{t}}\right).
\end{equation}
Hence, we conclude that
\begin{equation} \label{eq: asymptotic}
    \Lambda_3 \le \min_{t>0} \left[ 5 (3t+1)(3t+2) - 15(t+1)(3t+1)\sqrt{t}\arctan\left(\frac{1}{\sqrt{t}}\right) \right] \approx 5.73892.
\end{equation}
The minimum value is approximately $5.73892$ for $t\approx 0.69214$ and computed numerically.
\end{proof}

Plugging in the same class of functions $h(x,y,z)=(z^2+t)^2$ to the logarithmic Sobolev inequality \eqref{eq: log Sobolev} provides the upper bound for $\alpha_3$.
\begin{proof}[Proof of Theorem \ref{thm: upperalpha}]
    
We have
\begin{equation}
    \int_{S^2} h \dd \sigma=\int_{S^2}(z^2+t)^2 \dd \sigma =\left(t^2+\frac{2t}{3}+\frac15 \right)
\end{equation}
and 
\begin{equation} \label{eq: logh}
    \int_{S^2} h \log h \dd \sigma =2\int_{S^2} (z^2+t)^2 \log (z^2+t)\dd \sigma=2 \int_0^1 (z^2+t)^2\log(z^2+t)\dd z.
\end{equation}
The right hand side of \eqref{eq: logh} is equal to 
\begin{equation}
    \frac{32}{15} t^2 \sqrt{t}\arctan\left(\frac{1}{\sqrt{t}}\right) +\left(t^2+\frac{2t}{3}+\frac15 \right)\log(t^2+2t+1)-\frac{4(120t^2+35t+9)}{225}
\end{equation}
through elementary computations, which we omit the detail. The logarithmic Sobolev inequality \eqref{eq: log Sobolev} becomes
\begin{equation} 
    \frac{32}{15} \ge 2\alpha_3 \left[    \frac{32}{15} t^2 \sqrt{t}\arctan\left(\frac{1}{\sqrt{t}}\right) +\left(t^2+\frac{2t}{3}+\frac15 \right)\log\left(\frac{t^2+2t+1}{t^2+\frac{2t}{3}+\frac15}\right)-\frac{4(120t^2+35t+9)}{225} \right].
\end{equation}
Therefore,
\begin{equation}
    \alpha_3 \le \min_{t>0}\left[ \left(  2 t^2 \sqrt{t}\arctan\left(\frac{1}{\sqrt{t}}\right) +\frac{15}{16}\left(t^2+\frac{2t}{3}+\frac15 \right)\log\left(\frac{t^2+2t+1}{t^2+\frac{2t}{3}+\frac15}\right)-\frac{120t^2+35t+9}{60} \right)^{-1} \right].
\end{equation}
The minimum value is approximately $5.8358$ for $t\approx 0.757585$ and computed numerically.

\end{proof}

\section*{Acknowledgments}
I would like to thank my advisor, Luis Silvestre, for helpful discussions and feedback. 

\bibliographystyle{abbrv}
\bibliography{citation}

\end{document}